\newtheorem{thm}{Theorem}
\newtheorem{lem}[thm]{Lemma}
\newtheorem*{thmn}{Theorem}
\theoremstyle{definition}
\newtheorem{defn}[thm]{Definition}
\theoremstyle{remark}
\newcommand{\Alt}{\mathrm{Alt}}
\newcommand{\bN}{\mathbb{N}}
\begin{document}

\title{On a construction of A. Lucchini}

\author{Colin D. Reid\\
Universit\'{e} catholique de Louvain\\
Institut de Recherche en Math\'{e}matiques et Physique (IRMP)\\
Chemin du Cyclotron 2, 1348 Louvain-la-Neuve\\
Belgium\\
colin@reidit.net}

\maketitle

\begin{abstract}We show that the group constructed in the 2004 paper `A 2-generated just-infinite profinite
group which is not positively finitely generated' by A. Lucchini is in fact hereditarily just infinite and contains every countably based profinite group as a closed subgroup.  Lucchini's paper predates some related work of J. Wilson.\end{abstract}

\emph{Keywords}: Group theory; profinite groups; just infinite groups

\begin{thmn}\label{mainthm}Let $G$ be the profinite group constructed in \cite{Luc}.  Then $G$ is a hereditarily just infinite profinite group into which every countably based profinite group can be embedded as a closed subgroup.\end{thmn}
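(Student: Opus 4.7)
My approach is to treat Lucchini's $G$ as $\varprojlim G_n$, an inverse limit of iterated permutation wreath products of alternating groups $\Alt(k_n)$ with $k_n \to \infty$, realised as acting on a spherically homogeneous rooted tree $T$. Write $K_n$ for the kernel of the canonical map $G \twoheadrightarrow G_n$; these open normal subgroups form the identity filtration and correspond to the level stabilisers in the tree action. The proof splits cleanly into establishing the branch-group structure (for the first assertion) and constructing a universal iterated wreath product inside $G$ (for the second).

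For the hereditarily-just-infinite assertion, I would first identify $G$ as a \emph{branch} profinite group in the sense of Wilson: the tree action is level-transitive, and the rigid vertex stabilisers have finite index in the corresponding level stabilisers. This is essentially formal from the wreath-product description, modulo a small check using the explicit base actions of \cite{Luc}. Combined with the observation that every upper composition factor of $G$ is the nonabelian simple group $\Alt(k_n)$, $G$ falls into Wilson's class of just infinite branch groups with nonabelian simple composition factors. The standard commutator-and-conjugation argument then applies uniformly: for any open $U \le G$ and any nontrivial closed normal subgroup $N$ of $U$, pick $n$ so that the image of $N$ in $G_n$ is nontrivial; conjugating $N$ by rigid stabilisers of vertices at deep enough levels and exploiting the simplicity of $\Alt(k_{n+1}), \Alt(k_{n+2}), \dots$ forces the normal closure of $N$ in $U$ to contain the derived subgroups of entire layers of rigid stabilisers, hence to have finite index in $U$.

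For the universal embedding, I would invoke the classical fact that every countably based profinite group embeds in an inverse limit of iterated wreath products $\mathrm{Sym}(r_n) \wr \cdots \wr \mathrm{Sym}(r_1)$ for a suitable unbounded sequence $(r_n)$. Composing with the standard inclusion $\mathrm{Sym}(r) \hookrightarrow \Alt(r+2)$ makes the analogous statement true with alternating-group factors. Because the degrees $k_n$ in Lucchini's construction tend to infinity, one may inductively select nested sub-wreath-products $\mathrm{Sym}(r_n) \le \Alt(k_n)$ (choosing the $r_n$ after seeing the $k_n$), compatible with the tower of surjections $G_{n+1} \twoheadrightarrow G_n$, yielding a closed subgroup of $G$ isomorphic to such a universal iterated wreath product. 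Every countably based profinite group therefore embeds in $G$ as a closed subgroup.

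The main obstacle is extracting from Lucchini's explicit construction, whose permutation representations are engineered to secure 2-generation and failure of positive finite generation, the two structural features the plan requires: rigid stabilisers of finite index in the level stabilisers (so that Wilson's branch-group machinery applies and the commutator argument goes through), and sufficient freedom in the chosen base actions to accommodate arbitrary nested symmetric-group sub-wreath-products at each level. Once these are verified from the details of \cite{Luc}, both halves of the theorem follow by well-established general principles.
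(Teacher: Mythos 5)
There is a genuine gap, and it lies exactly where you park it as ``the main obstacle'': the structural features your plan needs are not present in Lucchini's construction in the form you assume. The group is \emph{not} naturally an iterated permutational wreath product acting on a spherically homogeneous rooted tree. Each $X_{i+1}$ is a subdirect power of $A_i \wr X_i$ (regular wreathing action), which one can rewrite as an \emph{intransitive} wreath product $A_i \wr_{\Omega_i} X_i$ where $\Omega_i$ is a disjoint union of $t_i$ free orbits; in particular $|\Omega_i| = t_i|X_i|$, which is not of the form $|\Omega_{i-1}|\cdot d_i$, so the $\Omega_i$ are not the levels of a rooted tree on which $G$ acts, and level-transitivity fails outright. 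Consequently Wilson's branch-group machinery (rigid vertex stabilisers of finite index, conjugation across a level) is not available as stated, and your first half has no foundation to stand on. The property that actually does the work is \emph{freeness} of the action of $G_i$ on $\Omega_i$: if $N \trianglelefteq G$ is closed and nontrivial, then for large $i$ the image $\pi_i(N)$ acts freely and nontrivially on $\Omega_i$, hence centralises no simple factor of the base group $T_i$, so $[\pi_i(N),T_i]=T_i$ and $\pi_{i+1}(N) \geq T_i$; iterating gives $N \geq \ker\pi_j$, so $G$ is just infinite. Hereditary just-infiniteness then follows not from a branch argument but from the observation that each $\ker\pi_j$ is again a group of the same type (its images are wreath products $T_{i+j}\wr_{\Omega_{i+j}} H_i$ with $H_i$ still acting freely), so the same argument applies to it.

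The second half has a related problem. Embedding a universal iterated wreath product $\mathrm{Sym}(r_n)\wr\cdots\wr\mathrm{Sym}(r_1)$ compatibly into the tower $G_{n+1}\twoheadrightarrow G_n$ requires matching the wreathing actions level by level, and again the actions you would need (transitive, hierarchically nested) are not the ones the construction provides (free, intransitive, with $\Omega_i$ a $G_i$-set rather than a set of children of level-$(i-1)$ vertices). This can likely be repaired, but it is unnecessary: the diagonal copy of $A_i$ inside the direct power $T_i$ gives, for varying $i$, pairwise commuting closed copies of the $A_i$ inside $G$, hence a closed subgroup isomorphic to $\prod_i A_i$. Since every finite group embeds into $A_i = \Alt(|X_i|)$ for all but finitely many $i$, and every countably based profinite group embeds as a closed subgroup of a countable direct product of finite groups, the universal embedding follows with no wreath-product bookkeeping at all. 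In short, both halves of your argument should be rebuilt around freeness of the $\Omega_i$-actions and the commuting diagonal copies of the $A_i$, rather than around tree/branch structure that the group does not have.
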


Lucchini's construction predates a paper by J. Wilson (\cite{Wil}) on `large' hereditarily just infinite groups.  In particular, Lucchini's group turns out to be an early example of the groups asserted to exist in Theorem A of \cite{Wil}.  We will give the proof as a series of lemmas.

\begin{defn}\label{typdef}Say a profinite group $G$ is of \emph{Lucchini type} if it is the inverse limit of a sequence of finite groups as follows:

$G_1$ is any finite group.  Thereafter $G_{i+1} = A_i \wr_{\Omega_i} G_i$ (with the natural surjection $G_{i+1} \rightarrow G_i$), such that:

$\Omega_i$ is a free (but not necessarily transitive) $G_i$-set;

$A_i$ is a non-abelian finite simple group;

every finite group appears as a subgroup of $A_i$ for infinitely many $i$.

Given $G$ of this form, write $T_i$ for the base group of $A_i \wr_{\Omega_i} G_i$.\end{defn}

\begin{lem}\label{lucgp}Let $G$ be the profinite group constructed in \cite{Luc}.  Then $G$ is of Lucchini type.\end{lem}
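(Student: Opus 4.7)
\medskip

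The plan is to open \cite{Luc} and read off the inverse system that defines $G$, then check it matches Definition~\ref{typdef} condition by condition. In the construction of \cite{Luc}, $G$ is built as $\varprojlim G_i$ where $G_1$ is a fixed finite group and at each stage $G_{i+1}$ is obtained by taking a permutational wreath product of the form $A_i \wr_{\Omega_i} G_i$ with the natural projection as the bonding map. The first task is to identify, in Lucchini's notation, the three data $(G_i, A_i, \Omega_i)$ and rewrite the recursive step in exactly the form of Definition~\ref{typdef}; this is essentially a translation exercise.

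The substantive checks are then three:

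\medskip

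(i) That $A_i$ is a non-abelian finite simple group. In \cite{Luc} the top groups iterated in the wreath product are alternating groups $\Alt(k_i)$ with $k_i \geq 5$, so this is immediate.

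\medskip

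(ii) That $\Omega_i$ is a free $G_i$-set. Here one must inspect how Lucchini chooses the underlying set on which $A_i^{\Omega_i}$ is built: in \cite{Luc} $\Omega_i$ is taken to be a disjoint union of finitely many copies of the regular $G_i$-set (the number of copies is dictated by the 2-generation / probabilistic estimates that are the focus of \cite{Luc}). Disjoint unions of regular actions are free, so freeness drops out at once.

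\medskip

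(iii) That every finite group appears inside $A_i$ for infinitely many $i$. Since the $A_i = \Alt(k_i)$ and the degrees $k_i$ are forced to tend to infinity in Lucchini's construction (again because the 2-generation condition requires more and more `room' at each level), and since every finite group embeds into $\Alt(n)$ once $n$ is large enough by Cayley's theorem, this follows.

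\medskip

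The main obstacle is the bookkeeping in step (ii): Lucchini's notation is tailored to his own purposes (controlling generator probabilities), and one has to verify that what he calls the `base' of the wreath product at level $i$ really is an $A_i$-power indexed by a free $G_i$-set, rather than, say, an arbitrary transitive $G_i$-set. Once the indexing set is correctly identified and seen to decompose into regular $G_i$-orbits, the rest of the verification is routine and the lemma follows.
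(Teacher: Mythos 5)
Your plan is the right one --- there is no other way to prove this lemma than to match Lucchini's construction against Definition~\ref{typdef} --- and you have correctly located the crux in step~(ii). But the proposal stops exactly where the proof begins. Your opening paragraph asserts that in \cite{Luc} ``$G_{i+1}$ is obtained by taking a permutational wreath product of the form $A_i \wr_{\Omega_i} G_i$,'' which is not how the construction is presented and quietly assumes the conclusion. What Lucchini actually does is form the \emph{regular} wreath product $A_i \wr X_i$ (with $A_i = \Alt(|X_i|)$ and base group $S_i$) and then define $X_{i+1}$ as a subdirect power of it, namely
\[ X_{i+1} = \{ (g_1,\dots,g_{t_i}) \mid g_j \in A_i \wr X_i, \; S_ig_1 = S_ig_2 = \dots = S_ig_{t_i} \}. \]
The single substantive observation of the proof is that this subdirect power is itself an intransitive wreath product $A_i \wr_{\Omega_i} X_i$, where $\Omega_i$ is the disjoint union of $t_i$ regular (hence free) $X_i$-orbits: the constraint that all $t_i$ coordinates lie in a common coset of $S_i$ means the top-group components agree, so $X_{i+1} \cong (S_i)^{t_i} \rtimes X_i$ with $X_i$ acting diagonally. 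You name this as ``the main obstacle'' and then declare that once it is done the rest is routine --- which is true, but it means the one step carrying mathematical content is left unexecuted.

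Two smaller points. First, your explanations of \emph{why} the degrees grow and why there are $t_i$ copies (``the 2-generation condition requires more and more room'') are speculative and unnecessary: all that is needed is that $A_i = \Alt(|X_i|)$ with $|X_i| \to \infty$, which gives condition (iii) immediately, and the value of $t_i$ is irrelevant to the verification. Second, condition (iii) as stated in Definition~\ref{typdef} only requires every finite group to embed in $A_i$ for infinitely many $i$, and here one in fact gets it for all but finitely many $i$; your appeal to Cayley's theorem (every finite group embeds in $\Alt(n)$ for $n$ large) is fine for that. So the skeleton is sound; what is missing is the explicit identification of the subdirect power with the wreath product over a free $X_i$-set.
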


\begin{proof}Lucchini's group is constructed as the inverse limit of the sequence $\{X_i\}_{i \in \bN}$, where $X_1 = \Alt(5)$.  For each $i \ge 1$, one first takes the group $A_i \wr X_i$ where the wreathing action is regular and $A_i = \Alt(|X_i|)$.  Note that $|X_i| \ge 5$ so $A_i$ is a non-abelian simple group.  Write $S_i$ for the base group of $A_i \wr X_i$. Then $X_{i+1}$ is constructed as a subdirect power of $A_i \wr X_i$:
\[ X_{i+1} = \{ (g_1,\dots,g_{t_i}) \mid g_j \in A_i \wr X_i, \; S_ig_1 =S_ig_2 = \dots = S_ig_{t_i} \}.\]
(The value of $t_i$ is not important for the present discussion.)

Note that $X_{i+1}$ can be regarded as an intransitive wreath product $A_i \wr_{\Omega_i} X_i$, where $\Omega_i$ is an $X_i$-set consisting of $t_i$ free orbits of $X_i$.  Finally, since $|X_i| \rightarrow \infty$ it is clear that any finite group appears as a subgroup of $A_i = \Alt(|X_i|)$ for all but finitely many $i$.  Thus $G$ is of Lucchini type as required.\end{proof}

From now on, $G$ will be a group of Lucchini type, with finite groups as in Definition \ref{typdef}, and homomorphisms $\pi_i: G \rightarrow G_i$ arising from the inverse limit.

\begin{lem}\label{hered}$G$ has a descending chain $G > N_1 > N_2 > \dots$ of open normal subgroups with trivial intersection, such that each $N_i$ is of Lucchini type.\end{lem}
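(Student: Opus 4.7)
The natural candidates for the chain are $N_k := \ker(\pi_k \colon G \to G_k)$, which are open normal subgroups of $G$ satisfying $\bigcap_k N_k = 1$, since $G = \varprojlim_i G_i$ via the maps $\pi_i$. So the real work is to verify that each $N_k$ is itself of Lucchini type; strictness of the descent will be a tidying-up afterwards.

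For the main step I would exhibit $N_k$ as an explicit inverse limit of finite groups fitting the template of Definition \ref{typdef}. Set $K_m := \ker(G_m \twoheadrightarrow G_k)$ for $m \ge k$; then $K_k = 1$ and $N_k = \varprojlim_{m \ge k} K_m$. Because the transition map $G_{m+1} = A_m \wr_{\Omega_m} G_m \to G_m$ kills precisely the base group $T_m = A_m^{\Omega_m}$, its restriction to the kernel of the composite $G_{m+1} \to G_k$ yields
\[ K_{m+1} \;=\; T_m \rtimes K_m \;=\; A_m \wr_{\Omega_m} K_m, \]
where $K_m$ acts on $\Omega_m$ by restricting the ambient $G_m$-action. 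This is exactly the Lucchini-type wreath recursion for $N_k$: the restricted action is still free because freeness passes to subgroups; the $A_m$ are unchanged, so they are still non-abelian simple; and the "every finite group is a subgroup of $A_m$ for infinitely many $m$" clause is inherited, since removing the finitely many indices below $k$ discards only finitely many of them. Hence $N_k$ is of Lucchini type (with initial group $K_k = 1$, or equivalently starting the sequence at $K_{k+1} = T_k$).

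It remains to ensure strict descent. The inclusion $N_k \supset N_{k+1}$ is strict precisely when $T_k \ne 1$, i.e.\ whenever $\Omega_k$ is nonempty; if any $\Omega_k$ should be empty, the construction is trivial at that step and one may pass to the cofinal subsequence of indices with $\Omega_k \ne \emptyset$, to which the preceding paragraph applies verbatim. The intersection remains trivial because the subsequence is still cofinal in the original inverse system.

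I do not anticipate a serious obstacle. The whole argument rests on the near-tautological observation that the restriction of a free action to a subgroup is free, which is exactly what is needed to transport the Lucchini-type data from $G_m$ to $K_m$; the only bookkeeping is to reindex so that the new tower begins at $K_k = 1$ and to discard any degenerate steps to guarantee proper inclusions.
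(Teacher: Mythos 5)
Your proposal is correct and takes essentially the same route as the paper: there too one sets $N_j = \ker\pi_j$ and verifies the Lucchini-type wreath recursion for the finite quotients $H_i = \pi_{i+j}(N_j)$, which coincide with your $K_{i+j} = \ker(G_{i+j}\to G_j)$ by surjectivity of $\pi_{i+j}$, with freeness inherited by subgroups and the embedding condition surviving the removal of finitely many indices. Your extra remark about strictness of the inclusions (discarding any indices with $\Omega_k=\emptyset$) addresses a point the paper treats as obvious, but it does not change the argument.
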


\begin{proof}Let $G_i$ be a sequence of finite groups of the prescribed form, and let $\pi_i: G \rightarrow G_i$ be the homomorphism arising from the inverse limit.  Let $N_j = \ker \pi_j$.  Clearly $G > N_1 > N_2 > \dots$ is a descending chain of open normal subgroups with trivial intersection.  Fix $j$ and set $H_i = \pi_{i+j}(N_j)$.  Then for all $i\ge 1$, $H_{i+1} = T_{i+j} \wr_{\Omega_{i+j}} H_i$.  Moreover $H_i \le G_{i+j}$, so $H_i$ acts freely on $\Omega_{i+j}$.  The condition on the set of groups $\{A_i \mid i \ge 1\}$ is clearly inherited by the set $\{A_{i+j} \mid i \ge 1\}$.  Hence $N_j$ is of Lucchini type.\end{proof}

\begin{lem}\label{hji}All groups of Lucchini type are hereditarily just infinite.\end{lem}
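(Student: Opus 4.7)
The plan is to reduce everything to a single structural lemma on wreath products and then chase it along the filtration by the $N_j$. The key ingredient is the following claim: if $W = A \wr_\Omega Q$ with $A$ a non-abelian finite simple group and $Q$ acting freely on $\Omega$, then every normal subgroup $N$ of $W$ that is not contained in the base group $A^\Omega$ already contains $A^\Omega$. To prove this I would take $(f, q) \in N$ with $q \neq 1$ and conjugate by an element of $A^\Omega$ supported at a single point $\omega_0$; freeness of the action forces $q\omega_0 \neq \omega_0$, so the resulting commutator lies in $N \cap A^\Omega$ with support exactly $\{\omega_0, q\omega_0\}$ and a prescribed nontrivial value at $\omega_0$. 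A second commutator inside $A^\Omega$ isolates the factor $A_{\omega_0}$, using $A = [A,A]$; ranging over $\omega_0$ yields $A^\Omega \leq N$.

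Given this lemma I would show $G$ is just infinite. Let $K \trianglelefteq G$ be closed and nontrivial, and let $i_0$ be the smallest index with $K \not\leq N_{i_0}$; then $\pi_{i_0}(K)$ is a nontrivial normal subgroup of $G_{i_0}$. For each $j > i_0$ the image $\pi_{j-1}(K)$ is still nontrivial, so $\pi_j(K)$ is not contained in $T_{j-1}$, the kernel of $G_j \to G_{j-1}$, and the lemma forces $\pi_j(K) \supseteq T_{j-1}$. An induction on $j$ then gives $\pi_j(K) \supseteq \pi_j(N_{i_0})$ for every $j \geq i_0$, which translates to $N_{i_0} \subseteq K N_j$ for all such $j$. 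Combining this with $\bigcap_j N_j = 1$ and closedness of $K$ yields $N_{i_0} \leq K$, so $K$ is open.

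Lemma~\ref{hered} combined with the previous step shows that each $N_j$ is just infinite, and the same argument inside $N_j$ shows $Z(N_j) = 1$: a nontrivial center would be an open abelian normal subgroup of $N_j$ and would thus contain a deeper term of the Lucchini filtration of $N_j$, which is of Lucchini type and therefore non-abelian. From this I would deduce $C_G(N_j) = 1$ for every $j$: it is closed and normal in $G$, so if nontrivial it is open by just infiniteness of $G$ and contains some $N_k$ with $k \geq j$; but then $N_k \leq N_j \cap C_G(N_j) = Z(N_j) = 1$, a contradiction.

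Finally, let $H$ be open in $G$ and $K \trianglelefteq H$ closed and nontrivial, and fix $j$ with $N_j \leq H$. The commutator subgroup $[K, N_j]$ lies in $K$ (by normality of $K$ in $H$) and in $N_j$ (by normality of $N_j$ in $G$), hence in $K \cap N_j$. If $[K, N_j] \neq 1$ then $K \cap N_j$ is a nontrivial closed normal subgroup of the just infinite group $N_j$, and so is open in $N_j$; hence $K$ is open. If $[K, N_j] = 1$ then $K \leq C_G(N_j) = 1$, contradicting $K \neq 1$. The main obstacle is really the wreath product lemma: the free-action hypothesis is used critically in the observation that a single commutator has support at exactly two distinct points, whereas the remainder of the proof is routine tracking of images through the inverse system.
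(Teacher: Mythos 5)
Your proposal is correct and follows essentially the same route as the paper: the heart of both arguments is that a normal subgroup of $A_i \wr_{\Omega_i} G_i$ with non-trivial image in $G_i$ must contain the whole base group $T_i$ (using freeness of the action plus non-abelian simplicity of $A_i$), iterated along the inverse system to show every non-trivial closed normal subgroup contains some $\ker\pi_j$. The only difference is one of explicitness: the paper dispatches the hereditary part in one line by citing Lemma~\ref{hered} and the standard criterion, whereas you derive it in full via $Z(N_j)=1$, $C_G(N_j)=1$ and the commutator argument $[K,N_j]\le K\cap N_j$ for $K\trianglelefteq H$ with $N_j\le H$ open — a worthwhile piece of bookkeeping, but not a different proof.
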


\begin{proof}Let $G$ be a group of Lucchini type.  By Lemma \ref{hered} it suffices to show that $G$ is just infinite.  Let $N$ be a non-trivial closed normal subgroup of $G$.  Then there exists $j$ such that for all $i \ge j$, $\pi_i(N)$ is non-trivial.  In this case $\pi_i(N)$ acts freely and non-trivially on $\Omega_i$, so that $\pi_i(N)$ regarded as a subgroup of $T_i \rtimes G_i$ does not centralise any of the simple factors of $T_i$.  It follows that $[\pi_i(N),T_i] = T_i$, so $\pi_{i+1}(N) \ge T_i$.  As this applies for all $i \ge j$, in fact $N \ge \ker \pi_j$, so $N$ is an open subgroup of $G$.\end{proof}

\begin{lem}\label{univ}Let $G$ be a group of Lucchini type.  Then every countably based profinite group can be embedded into $G$ as a closed subgroup.\end{lem}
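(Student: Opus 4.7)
The plan is to express $H$ as an inverse limit of finite quotients, with surjective bonding maps $q_k \colon H_{k+1} \twoheadrightarrow H_k$ and kernels $K_k = \ker q_k$, and to construct a strictly increasing sequence of indices $n_1 < n_2 < \cdots$ together with embeddings $\iota_k \colon H_k \hookrightarrow G_{n_k}$ that are compatible, in the sense that the canonical projection $G_{n_{k+1}} \to G_{n_k}$ composed with $\iota_{k+1}$ equals $\iota_k \circ q_k$. Passing to the inverse limit then yields a continuous injective homomorphism $H \to G$, which is automatically a closed embedding by compactness of $H$.

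The base case is immediate: pick $n_1$ with $H_1 \le A_{n_1-1}$ (possible by the Lucchini-type hypothesis) and embed $H_1$ into a single coordinate of $T_{n_1-1} \le G_{n_1}$. For the inductive step, assume $\iota_k$ has been built, and use the Lucchini-type hypothesis to pick $m \ge n_k$ with $K_k \le A_m$; set $n_{k+1} = m+1$. The key tool is the Krasner--Kalou\v{z}nine embedding theorem, which realises the extension $1 \to K_k \to H_{k+1} \to H_k \to 1$ as a subgroup of the regular wreath product $K_k \wr H_k$ in a manner compatible with the quotient $q_k$. To transport this into $G_{n_{k+1}} = A_m \wr_{\Omega_m} G_m$, identify $G_{n_k}$ with its image in $G_m$ under the iterated canonical top-group inclusions $G_i \hookrightarrow G_{i+1}$ induced by the semidirect product structure. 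Then $\iota_k(H_k) \le G_m$ acts freely on $\Omega_m$, and restricting to a single $\iota_k(H_k)$-orbit $\Omega' \subseteq \Omega_m$ produces a subgroup $A_m \wr_{\Omega'} \iota_k(H_k)$ of $G_{n_{k+1}}$ that is canonically isomorphic to the regular wreath product $A_m \wr H_k$. The composition
\[ H_{k+1} \;\hookrightarrow\; K_k \wr H_k \;\hookrightarrow\; A_m \wr H_k \;\cong\; A_m \wr_{\Omega'} \iota_k(H_k) \;\le\; G_{n_{k+1}} \]
is then the desired $\iota_{k+1}$.

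Compatibility with $\iota_k$ holds because the projection $G_{n_{k+1}} \twoheadrightarrow G_m$ sends $\iota_{k+1}(H_{k+1})$ onto $\iota_k(H_k)$ via $\iota_k \circ q_k$ (precisely the projection property of Krasner--Kalou\v{z}nine), while the projection $G_m \twoheadrightarrow G_{n_k}$ restricts to the identity on the top-group copy of $G_{n_k}$ used above. The main obstacle is exactly this bookkeeping: arranging that the Krasner--Kalou\v{z}nine embedding and the ambient wreath-product inclusion into $G_{n_{k+1}}$ interact with the previously fixed $\iota_k$ on the nose, rather than merely up to conjugation by some element of $G_{n_{k+1}}$. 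The freeness of the $G_i$-action on $\Omega_i$ is what makes this possible, since it lets us place a regular copy of $A_m \wr H_k$ inside $G_{n_{k+1}}$ sitting directly over the prescribed subgroup $\iota_k(H_k)$.
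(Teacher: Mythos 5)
Your argument is correct, but it takes a genuinely different and noticeably heavier route than the paper's. The paper simply observes that $H=\varprojlim H_i$ is a closed subgroup of the direct product $\prod_i H_i$, that this product embeds in $\prod_k A_{i_k}$ by choosing for each $k$ an index $i_k$ with $H_k\le A_{i_k}$, and that $\prod_i A_i$ already sits inside $G$ as a closed subgroup: the diagonal copy of $A_i$ in the base group $T_i$ is centralised by the top group $G_i$, so the diagonal copies at different levels commute pairwise and topologically generate a direct product inside $G$. This sidesteps entirely what you identify as the main obstacle --- on-the-nose compatibility of the level-$k$ embeddings --- because nothing has to be made compatible: no Krasner--Kalou\v{z}nine theorem, no tracking of bonding maps, and not even the freeness of the $G_i$-action on $\Omega_i$ is needed for this particular lemma. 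Your inductive construction via
\[ H_{k+1}\hookrightarrow K_k\wr H_k\hookrightarrow A_m\wr_{\Omega'}\iota_k(H_k)\le G_{n_{k+1}} \]
does go through: a regular orbit $\Omega'$ of $\iota_k(H_k)$ exists by freeness, $\iota_k(H_k)$ normalises the copy of $A_m^{\Omega'}$ in $T_m$, the Krasner--Kalou\v{z}nine embedding projects onto $q_k$, and the canonical splittings $G_{n_k}\hookrightarrow G_m$ are sections of the projections, so the inverse limit of the $\iota_k$ is an injective continuous map with closed image. This is close in spirit to the iterated wreath-product embeddings of \cite{Wil}, and it buys you an embedding that respects the inverse-limit structure of $H$ level by level; but for the bare statement of the lemma the paper's direct-product observation is much shorter.
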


\begin{proof}Let $H$ be a countably based profinite group.  Then $H$ is the inverse limit of a countable collection $\{H_i \mid i \in \bN\}$ of finite groups, and so $H$ is isomorphic to a closed subgroup of $\prod_{i \in \bN} H_i$.  Furthermore, $\prod_{i \in \bN} H_i$ is isomorphic to a closed subgroup of $\prod_{i \in \bN} A_i$: choose $i_1$ such that $H_1$ embeds in $A_{i_1}$, then $i_2 > i_1$ such that $H_2$ embeds in $A_{i_2}$, and so on.  Finally, $\prod_{i \in \bN} A_i$ appears naturally as a closed subgroup of $G$: take $A_i$ to be the diagonal subgroup of the direct power $T_i$, and note that these copies of $A_i$ all appear as subgroups of $G$ such that $[A_i,A_j]=1$ for all $i \not= j$.\end{proof}

\section*{Acknowledgments}My thanks go to Laurent Bartholdi for persuading me to look again at \cite{Luc} and determine whether the group in question is hereditarily just infinite.


\begin{thebibliography}{4}
\bibitem{Luc}
A. Lucchini, A 2-generated just-infinite profinite group which is not positively finitely generated, Israel J. Math. 141 (2004), 119--123.
\bibitem{Wil}
J. S. Wilson, Large hereditarily just infinite groups, J. Alg. 324 (2010), 248--255.
\end{thebibliography}
\end{document}